
\documentclass[11pt]{amsart}
\usepackage{amsfonts}
\usepackage{amsmath}
\usepackage{amssymb}
\usepackage{graphicx}

\setcounter{MaxMatrixCols}{10}

\theoremstyle{definition}
\newtheorem{definition}{Definition}[section]
\theoremstyle{plain}
\newtheorem{lemma}[definition]{Lemma}

\newtheorem{theorem}[definition]{Theorem}
\newtheorem{corollary}[definition]{Corollay}
\newtheorem{example}[definition]{Example}

\numberwithin{equation}{section}

\begin{document}
\title[A New Darbo's Theorem by The Sequences of Functions]{A New Type of
Darbo's Fixed Point Theorem Defined by The Sequences of Functions }
\author[V. Karakaya]{Vatan Karakaya}
\address[V. Karakaya]{Department of Mathematical Engineering, Yildiz
Technical University, Davutpasa Campus, Esenler, 34210 Istanbul,Turkey}
\email{\texttt{vkkaya@yahoo.com}}
\author[N. \c{S}im\c{s}ek]{Necip \c{S}im\c{s}ek}
\address[N. \c{S}im\c{s}ek]{ Department of Mathematics, Istanbul Commerce
University, S\"{u}tl\"{u}ce Campus, Beyoglu, 34445 Istanbul,Turkey}
\email{necsimsek@yahoo.com}
\author[D. Sekman]{Derya Sekman}
\address[D. Sekman]{ Department of Mathematics, Ahi Evran University, Ba\u{g}%
ba\c{s}\i\ Campus, 40100 K\i r\c{s}ehir,Turkey}
\email{deryasekman@gmail.com}
\keywords{ Fixed point, measure of noncompactness, shifting distance
sequences of functions}
\subjclass[2010]{ Primary 47H08; Secondary 47H10.}

\begin{abstract}
In this paper, we introduce a new type of Darbo's fixed point theorem by
using concept of function sequences with shifting distance property.
Afterward, we investigate existence of fixed point under this the theorem.
Also we are going to give interesting example held the conditions of
sequences of functions.
\end{abstract}

\maketitle

\section{Introduction}

Fixed point theory, which develops as a subbranch of operator theory, is
closely related to application areas of mathematics and the various
disciplines such as the geometry of Banach spaces, mesaure of
noncompactness, game theory and economy, so on. The topological aspect of
the fixed point theory, which proceeds in topological and metric terms, is
based on the Brouwer's fixed point theorem. The Brouwer's fixed point
theorem defined on finite dimensional spaces is generalized to infinite
dimensional spaces under the compact operator by Schauder \cite{schauder}.
However, the Schauder fixed point theorem is insufficient for noncompact
operators. In addition for noncompact operators, the existence of fixed
point is obtained by Darbo's fixed point theorem \cite{darbofixed}. The
definition of measure of noncompactness was introduced by Kuratowski and
Hausdorff \cite{kuro}. Many mathematicians have used the measure of
noncompactness concept and the Darbo's fixed point theorem to solve the
integral, differential and functional equation classes and then they have
achieved significant results \cite{banas1,banas2,mursaleen}.

The most effective and useful tools of fixed point theory are the concepts
of properties in contraction mappings classes. The first of this mapping
types is the Banach contraction principle \cite{banach}. This principle is
used to find existence and uniqueness of solution for a class of linear and
nonlinear equation systems and it has been generalized and extended by many
authors under different conditions, references therein \cite%
{derya,bouzara,karakaya}. In 1984, Khan et al. \cite{khan} introduced the
concept of altering distance functions and obtained some results on the
uniqueness of fixed point in complete metric space. Rhoades \cite{rhoades}
extended and generalized this concept to complete metric space and proved a
generalized this result by Dutta and Choudhury \cite{dutta}. Later, Berzig 
\cite{berzig} introduced the concept of shifting distance functions and
established fixed point theorem which generalized Banach contraction
principle. Recently, Samadi and Ghaemi \cite{samadi} prove some
generalizations of Darbo's fixed point theorem associated with measure of
noncompactness by using the notion of shifting distance functions and given
an application of the integral equation of mixed type.

In this work, we aim to contribute to functional analysis and operator
theory by making a generalization of Darbo's fixed point theorem with the
help of function sequences. We introduce a new type of Darbo's fixed point
theorem by using concept of function sequences with shifting distance
property. Afterward, we investigate existence of fixed point under this the
theorem. Also we are going to give interesting example held the conditions
of sequences of functions.

\section{Preliminaries}

Let $E$ be a nonempty subset of a Banach space $X$. We define $\overline{E}$
and $Conv(E)$ the closure and closed convex hull of $E,$ respectively. Also,
we denote by $M_{X}$ which is the family of all nonempty bounded subsets of $%
X$ and $N_{X}$ that is subfamily consisting of all relatively compact
subsets of $X$.

\begin{definition}[see; \protect\cite{darbo}]
\label{darbo}A mapping $\mu :M_{X}\rightarrow 
\mathbb{R}
^{+}$ is called a measure of noncompactness if it satisfy the following
conditions

$(M_{1})$ The family Ker $\mu =\left\{ A\in M_{X}:\mu (A)=0\right\} $ is
nonempty and Ker $\mu \subseteq N_{X}$

$(M_{2})$ $A\subseteq B\Rightarrow \mu (A)\leq \mu (B)$

$(M_{3})$ $\mu (\bar{A})=\mu (A)$, where $\bar{A}$ denotes the closure of $A$

$(M_{4})$ $\mu (conv$ $A)=$ $\mu (A)$,

$(M_{5})$ $\mu (\lambda A+(1-\lambda )B)\leq \lambda \mu (A)+(1-\lambda )\mu
(B)$ for $\lambda \in \left[ 0,1\right] $

$(M_{6})$ If $\left\{ A_{n}\right\} $ is a sequence of closed sets in $M_{X}$
such that $A_{n+1}\subseteq A_{n}$ for $n=1,2,\ldots $ and $%
\lim\limits_{n\rightarrow \infty }\mu (A_{n})=0$, then the following
intersection is nonempty.%
\begin{equation*}
A_{\infty }=\underset{n=1}{\overset{\infty }{\cap }}A_{n}
\end{equation*}

If $(M_{4})$ holds, then $A_{\infty }\in $ Ker$\mu .$ To do this, let $%
\lim\limits_{n\rightarrow \infty }\mu (A_{n})=0.$ As $A_{\infty }\subseteq
A_{n}$ for each $n=0,1,2,...;$ by the monotonicity of $\mu ,$ we obtain 
\begin{equation*}
\mu (A_{\infty })\leq \lim\limits_{n\rightarrow \infty }\mu (A_{n})=0.
\end{equation*}%
So, by $(M_{1})$, we get that $A_{\infty }$ is nonempty and$\ A_{\infty }\in 
$ Ker$\mu $.
\end{definition}

\begin{theorem}[see; \protect\cite{schauder}]
\label{sch}Let $E$ be a closed and convex subset of a Banach space $X$. Then
every compact, continuous map $T:E\rightarrow E$\ has at least one fixed
point.
\end{theorem}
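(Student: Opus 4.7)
The plan is to reduce the infinite-dimensional problem to a sequence of finite-dimensional ones via the Schauder projection, apply Brouwer's fixed point theorem on each level, and then pass to the limit using the compactness of $T$.

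First, I would exploit the hypothesis that $T$ is compact in the sense that $\overline{T(E)}$ is a compact subset of $X$. For each integer $n\ge 1$, relative compactness furnishes a finite $(1/n)$-net $\{y_1^{(n)},\dots,y_{k_n}^{(n)}\}\subseteq T(E)\subseteq E$ such that the balls of radius $1/n$ about these points cover $\overline{T(E)}$. Using the continuous ``hat'' functions $\varphi_i^{(n)}(x)=\max\{0,\,1/n-\|x-y_i^{(n)}\|\}$, normalized into a partition of unity on $\overline{T(E)}$, I would form the Schauder projection
\[
P_n(x)=\frac{\sum_{i=1}^{k_n}\varphi_i^{(n)}(x)\,y_i^{(n)}}{\sum_{i=1}^{k_n}\varphi_i^{(n)}(x)},
\]
which is continuous, takes values in $E_n:=\mathrm{conv}\{y_1^{(n)},\dots,y_{k_n}^{(n)}\}$, and satisfies $\|P_n(x)-x\|<1/n$ for every $x\in\overline{T(E)}$.

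Next I would set $T_n:=P_n\circ T$ and restrict it to $E_n$. Since each $y_i^{(n)}\in E$ and $E$ is convex, we have $E_n\subseteq E$; hence $T_n:E_n\to E_n$ is a continuous self-map of a compact convex subset of a finite-dimensional subspace. Brouwer's fixed point theorem therefore supplies some $x_n\in E_n$ with $T_n(x_n)=x_n$, and by construction $\|x_n-T(x_n)\|=\|P_n(T(x_n))-T(x_n)\|<1/n$.

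Finally, because $T(x_n)\in\overline{T(E)}$ and the latter is compact, some subsequence $T(x_{n_j})$ converges to a point $x^{\ast}\in\overline{T(E)}\subseteq\overline{E}=E$, using that $E$ is closed. The estimate $\|x_{n_j}-T(x_{n_j})\|<1/n_j$ then forces $x_{n_j}\to x^{\ast}$ as well, and continuity of $T$ yields $T(x^{\ast})=x^{\ast}$. The principal obstacle is the Schauder-projection step: one must verify both that $P_n$ is continuous with the asserted error bound and, crucially, that $E_n\subseteq E$ so that Brouwer's theorem really applies to a self-map---this is precisely where the convexity of $E$ together with the hypothesis $T(E)\subseteq E$ enter decisively.
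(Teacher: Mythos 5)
The paper states this result purely as a cited preliminary (it invokes \cite{schauder} and gives no proof of its own), so there is no internal argument to compare yours against. Your proposal is the classical Schauder-projection proof and it is correct: choosing the $(1/n)$-net points inside $T(E)\subseteq E$ guarantees $E_n=\mathrm{conv}\{y_1^{(n)},\dots,y_{k_n}^{(n)}\}\subseteq E$ by convexity, the bound $\|P_n(x)-x\|<1/n$ holds because $P_n(x)$ is a convex combination only of net points within distance $1/n$ of $x$, Brouwer's theorem applies to the continuous self-map $T_n=P_n\circ T$ of the compact convex finite-dimensional set $E_n$, and compactness of $\overline{T(E)}$ together with closedness of $E$ lets you extract a convergent subsequence and conclude $T(x^{\ast})=x^{\ast}$ by continuity. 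The only implicit (and harmless) assumption is that $E$ is nonempty, which the statement omits but which is needed for the conclusion to be nonvacuous.
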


\begin{theorem}[see; \protect\cite{darbo}]
\label{darb}Let $E$ be a nonempty, bounded, closed and convex subset of a
Banach space $X$ and let\ $T:E\rightarrow E$ be a continuous mapping.
Suppose that there exists a constant $k\in \left[ 0,1\right) $ such that%
\begin{equation*}
\mu (T(A))\leq k\mu (A)
\end{equation*}%
for any subset $A$ of $E$, then $T$ has a fixed point.
\end{theorem}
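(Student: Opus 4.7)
The plan is to construct a descending sequence of $T$-invariant, nonempty, closed and convex sets on which the measure of noncompactness collapses to zero, and then to invoke Schauder's theorem on the limiting compact set.

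First I would set $A_{0}=E$ and define recursively $A_{n+1}=\overline{\mathrm{Conv}}(T(A_{n}))$ for $n\geq 0$. By an immediate induction using the convexity and closedness of $E$, each $A_{n}$ is a nonempty, closed, convex subset of $E$, and the chain $A_{n+1}\subseteq A_{n}$ holds because $T(A_{0})\subseteq E$ is contained in the closed convex set $A_{0}$, and this inclusion propagates through the recursion. In particular $T(A_{n})\subseteq A_{n+1}\subseteq A_{n}$, so every $A_{n}$ is $T$-invariant.

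Next I would run the hypothesis $\mu(T(A))\leq k\,\mu(A)$ together with properties $(M_{3})$ and $(M_{4})$ of the measure of noncompactness to estimate
\begin{equation*}
\mu(A_{n+1})=\mu\bigl(\overline{\mathrm{Conv}}(T(A_{n}))\bigr)=\mu(T(A_{n}))\leq k\,\mu(A_{n}),
\end{equation*}
and iterate to obtain $\mu(A_{n})\leq k^{n}\mu(E)$. Since $k\in[0,1)$ and $E$ is bounded, $\mu(A_{n})\to 0$. Property $(M_{6})$ then guarantees that $A_{\infty}=\bigcap_{n\geq 0}A_{n}$ is nonempty, and the remark following Definition \ref{darbo} shows $A_{\infty}\in\mathrm{Ker}\,\mu$, i.e.\ $A_{\infty}$ is relatively compact; it is also closed and convex as the intersection of closed convex sets, and $T$-invariant since $T(A_{\infty})\subseteq\bigcap_{n}T(A_{n})\subseteq\bigcap_{n}A_{n+1}=A_{\infty}$.

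Finally I would apply Theorem \ref{sch} to the continuous self-map $T\colon A_{\infty}\to A_{\infty}$, which is defined on a nonempty, compact, convex subset of $X$; this yields the desired fixed point. The main technical point to handle carefully is the simultaneous verification at the limit that $A_{\infty}$ is nonempty, convex, compact, and $T$-invariant, since Schauder's theorem requires all four properties at once; once the monotone construction and the estimate $\mu(A_{n})\leq k^{n}\mu(E)$ are in place, however, each follows directly from the axioms $(M_{1})$--$(M_{6})$, and no further obstacle remains.
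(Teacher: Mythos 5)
Your proof is correct and follows the standard argument: the iterated closed convex hulls $A_{n+1}=\overline{\mathrm{Conv}}(T(A_n))$, the geometric decay $\mu(A_n)\leq k^n\mu(E)$ via $(M_3)$, $(M_4)$ and the contraction hypothesis, then $(M_6)$ and Schauder on $A_\infty$. The paper states this theorem without proof (it is cited), but this is exactly the construction used in its proof of Theorem \ref{main}, of which this result is the special case $\psi_n=I_n$, $\phi_n=kI_n$; so your approach is essentially the same as the paper's.
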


\begin{definition}[see; \protect\cite{berzig}]
\label{berzig}Let $\psi ,\phi :[0,\infty )\rightarrow 
\mathbb{R}
$ be two functions. The pair of functions $\left( \psi ,\phi \right) $ is
said to be a pair of shifting distance function, if the following conditions
hold

$(i)$ for $u,v\in \lbrack 0,\infty )$ if $\psi (u)\leq \phi (v)$, then $%
u\leq v,$

$(ii)$ for $\left\{ u_{k}\right\} ,\left\{ v_{k}\right\} \subset \lbrack
0,\infty )$ with $\underset{k\rightarrow \infty }{lim}u_{k}=\underset{%
k\rightarrow \infty }{lim}v_{k}=w$, if $\psi (u_{k})\leq \phi (v_{k})$ for
all $k\in 
\mathbb{N}
$, then $w=0$.
\end{definition}

\section{Main Result}

Now we will give the definition of a pair of sequences of functions with
shifting distance property and by using this function class, we introduce a
new type of Darbo's fixed point theorem. Afterward, we investigate fixed
point of mapping according to generalized new Darbo's fixed point theorem.
Also we are going to give one interesting example.

\begin{definition}
\label{shift}Let $\psi _{n},\phi _{n}:[0,\infty )\rightarrow 
\mathbb{R}
$ be two sequences of functions. The pair of sequences of functions $\left(
\psi _{n},\phi _{n}\right) $ is said to be a pair of sequences of functions
with shifting distance property which satisfy the following conditions

$(i)$ for $u,v\in \lbrack 0,\infty )$ if $\psi _{n}(u)\leq \phi
_{n}(v)\rightarrow \psi (u)\leq \phi (v)$ uniformly in $n$, then $u\leq v,$

$(ii)$ for $\left\{ u_{k}\right\} ,\left\{ v_{k}\right\} \subset \lbrack
0,\infty )$ with $\underset{k\rightarrow \infty }{lim}u_{k}=\underset{%
k\rightarrow \infty }{lim}v_{k}=w$, if $\psi _{n}(u_{k})\leq \phi
_{n}(v_{k})\rightarrow \psi (u_{k})\leq \phi (v_{k})$ \ for all $k\in 
\mathbb{N}
$, uniformly in $n$ , then $w=0$.
\end{definition}

\begin{definition}
The pair $\left( \psi _{n},\phi _{n}\right) $ is said to be having shifting
distance property if $\left( \psi _{n},\phi _{n}\right) \rightarrow \left(
\psi ,\phi \right) $ uniformly in $n$ and the pair $\left( \psi ,\phi
\right) $ is shifting distance function.
\end{definition}

\begin{lemma}
Let $\psi _{n},\phi _{n}:[0,\infty )\rightarrow R$ be two sequences of
functions. Assume that the sequences of functions hold following conditions

$(i)$ if $\left(\psi _{n}\right)$ upper semi-continuous sequence of
functions and $\psi _{n}\leq \psi _{n+1}$, then $\psi_{n} \rightarrow \psi $ convergent (uniformly in $n$),

$(ii)$ if $\left( \phi _{n}\right) $ lower semi-continuous sequence of
functions and $\phi _{n}\geq \phi _{n+1}$, then $\phi _{n}\rightarrow \phi $
convergent (uniformly in $n$).

Then, $\left( \psi _{n},\phi _{n}\right) $ is called the pair of sequences
of functions having shifting distance property.
\end{lemma}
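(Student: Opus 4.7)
The plan is to verify the two requirements of the second definition preceding the Lemma: the sequence pair $(\psi_n,\phi_n)$ must converge uniformly to a pair $(\psi,\phi)$, and that limit pair must itself be a shifting distance function in the sense of Definition~\ref{berzig}. The uniform convergence is supplied directly by the ``then'' clauses of hypotheses (i) and (ii), so only the second requirement demands work; combining the two coordinate-wise immediately yields the joint convergence $(\psi_n,\phi_n)\to(\psi,\phi)$, uniform in $n$.

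To certify that $(\psi,\phi)$ satisfies condition~(i) of Definition~\ref{berzig}, I would take arbitrary $u,v\in[0,\infty)$ with $\psi(u)\le\phi(v)$ and exploit the monotonicity built into the hypotheses. Since $\psi_n$ is increasing with uniform limit $\psi$, we have $\psi_n(u)\le\psi(u)$ for every $n$; since $\phi_n$ is decreasing with uniform limit $\phi$, we have $\phi_n(v)\ge\phi(v)$. Chaining produces $\psi_n(u)\le\phi_n(v)$ for every $n$, and because convergence is uniform in $n$, this is precisely the setup of Definition~\ref{shift}(i), from which $u\le v$ follows. Condition~(ii) of Definition~\ref{berzig} is handled analogously: given sequences $\{u_k\},\{v_k\}\subset[0,\infty)$ with common limit $w$ and $\psi(u_k)\le\phi(v_k)$, the same monotone sandwich gives $\psi_n(u_k)\le\phi_n(v_k)$ uniformly in $n$ for every $k$, and Definition~\ref{shift}(ii) then forces $w=0$.

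The main obstacle I expect is the careful bookkeeping in the passage from the monotone sandwich to the sequential uniform inequality demanded by Definition~\ref{shift}: one has to ensure that the reading ``$\psi_n(u)\le\phi_n(v)$, uniformly in $n$'' converging to the relation ``$\psi(u)\le\phi(v)$'' is the intended formalization, and that the semi-continuity of $\psi$ (as a uniform limit of increasing upper semi-continuous functions) and of $\phi$ (as a uniform limit of decreasing lower semi-continuous functions) is correctly inherited so that the pointwise inequalities used above are legitimate. Once these ingredients are invoked explicitly, both clauses of Definition~\ref{shift} transfer to $(\psi,\phi)$ in the sense of Definition~\ref{berzig}, and the conclusion that $(\psi_n,\phi_n)$ has the shifting distance property follows.
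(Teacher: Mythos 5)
Your proposal is correct relative to the paper and takes essentially the same route as the paper's own proof: assume the limit inequality $\psi(u)\leq \phi(v)$, use the monotone sandwich $\psi_{n}(u)\leq \psi(u)\leq \phi(v)\leq \phi_{n}(v)$ to obtain $\psi_{n}(u)\leq \phi_{n}(v)$ for all $n$, and then appeal to the definitional shifting-distance implication to conclude $u\leq v$ (respectively $w=0$ in the sequential case). The only difference is bookkeeping --- you attribute the final implication to Definition \ref{shift} while the paper attributes it to Definition \ref{berzig} --- which has no logical content, and indeed your argument inherits exactly the same reliance as the paper's on that implication being available for the limit pair.
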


\begin{proof}
Let $\left( \psi _{n}\right) $ be increasing and bounded by $\psi (u)$, also
let $\left( \phi _{n}\right) $ be decreasing and bounded by $\phi (v)$. By
using condition $(i)$ of Definition \ref{berzig}, we get 
\begin{equation*}
\psi _{1}(u)<\psi _{2}(u)<\cdots <\psi _{n}(u)<\psi (u)\leq \phi (v)<\phi
_{n}(v)<\cdots <\phi _{2}(v)<\phi _{1}(v)\text{.}
\end{equation*}%
Hence we can write for all $n\in 
\mathbb{N}
$ 
\begin{equation}
\psi _{n}(u)\leq \phi _{n}(v) .  \label{1}
\end{equation}%
Taking limit on both sides of (\ref{1}), 
\begin{eqnarray*}
\underset{n\rightarrow \infty }{lim}\psi _{n}(u) &\leq &\underset{%
n\rightarrow \infty }{lim}\phi _{n}(v) \\
\psi (u) &\leq &\phi (v)\Rightarrow u\leq v.
\end{eqnarray*}%
By using condition $(ii)$ of Definition \ref{berzig}, for $\left\{
u_{k}\right\} ,\left\{ v_{k}\right\} \subset \lbrack 0,\infty )$ with $%
\underset{k\rightarrow \infty }{lim}u_{k}=\underset{k\rightarrow \infty }{lim%
}v_{k}=w$ if $\psi _{n}(u_{k})\leq \phi _{n}(v_{k})$ for all $n,k\in 
\mathbb{N}
,$ taking limit and we get%
\begin{eqnarray*}
\underset{n\rightarrow \infty }{lim}\psi _{n}(u_{k}) &\leq &\underset{%
n\rightarrow \infty }{lim}\phi _{n}(v_{k}) \\
\text{ \ \ }\psi (u_{k}) &\leq &\phi (v_{k})\Rightarrow w=0.
\end{eqnarray*}%
That is $\left( \psi _{n},\phi _{n}\right) $ is the pair of sequences of
functions with shifting distance property.
\end{proof}

\begin{theorem}
\label{main}Let $E$ be a nonempty, bounded, closed and convex subset of the
Banach space $X$. Suppose that $T:E\rightarrow E$ is a continuous mapping
such that%
\begin{equation}
\psi _{n}(\mu (TA))\leq \phi _{n}(\mu (A))  \label{2}
\end{equation}%
for any nonempty subset $A\subset $ $E$, where $\mu $ is an arbitrary
measure of noncompatness and $\psi _{n},\phi _{n}:[0,\infty )\rightarrow 
\mathbb{R}
$ be the pair of sequences of functions with shifting distance property.
Then, $T$ has a fixed point in $E.$
\end{theorem}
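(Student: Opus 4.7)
The plan is to follow the classical Darbo scheme, now driven by the shifting distance sequences $(\psi_n,\phi_n)$ instead of a single contractive constant. First, I would iterate the ``closed convex hull of the image'' construction: set $E_0=E$ and $E_{k+1}=\mathrm{Conv}(T(E_k))$. A routine induction, using that $T(E_k)\subseteq E_k$ together with convexity and closedness of $E_k$, produces a nested chain $E\supseteq E_1\supseteq E_2\supseteq\cdots$ of nonempty, closed, convex subsets of $E$. By the properties $(M_3)$ and $(M_4)$ of the measure of noncompactness, $\mu(E_{k+1})=\mu(\mathrm{Conv}(T(E_k)))=\mu(T(E_k))$, and by monotonicity $(M_2)$ the sequence $\{\mu(E_k)\}$ is nonincreasing and bounded below by $0$, so it converges to some $L\geq 0$.

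Next I would apply the hypothesis \eqref{2} with $A=E_k$, which, read as a statement uniform in $n$, gives
\[
\psi_n(\mu(E_{k+1}))\leq \phi_n(\mu(E_k))\qquad\text{for every }n,k\in\mathbb{N}.
\]
Setting $u_k=\mu(E_{k+1})$ and $v_k=\mu(E_k)$, both sequences converge to the common limit $L$, and for each fixed $k$ the displayed inequality holds for every $n$ and converges uniformly in $n$ to $\psi(u_k)\leq\phi(v_k)$. Condition (ii) of Definition \ref{shift} then forces $L=0$, so $\mu(E_k)\to 0$.

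Finally, axiom $(M_6)$ applied to the nested sequence of closed sets $\{E_k\}$ produces a nonempty intersection $E_\infty:=\bigcap_{k\geq 1}E_k$, and as recorded after Definition \ref{darbo} we have $E_\infty\in\mathrm{Ker}\,\mu$. Hence $E_\infty$ is a nonempty, closed, convex, relatively compact (therefore compact) subset of $E$. The inclusion $T(E_k)\subseteq E_{k+1}$ for each $k$ gives $T(E_\infty)\subseteq E_\infty$, so $T$ restricts to a continuous self-map of the nonempty compact convex set $E_\infty$, and Schauder's theorem (Theorem \ref{sch}) supplies a fixed point of $T$.

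The main obstacle is reconciling the double indexing: the shifting distance property in Definition \ref{shift} has $n$ running over the function family and $k$ running over the argument sequences, while the hypothesis \eqref{2} is stated with a single index $n$. The delicate point is to read \eqref{2} as holding for every $n$ with the set $A=E_k$ fixed, so that condition (ii) of Definition \ref{shift} applies directly to the pair of argument sequences $u_k,v_k$ coming from the Darbo iterates; once this indexing is set up, the rest is a routine combination of Darbo's construction, axiom $(M_6)$, and Schauder's theorem.
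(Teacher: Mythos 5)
Your proposal is correct and follows essentially the same route as the paper: the Darbo iteration $E_{k+1}=\mathrm{Conv}(T(E_k))$, the inequality $\psi_n(\mu(E_{k+1}))\leq\phi_n(\mu(E_k))$ fed into condition (ii) of the shifting distance property to force $\mu(E_k)\to 0$, and then $(M_6)$ plus Schauder on $E_\infty$. The only difference is cosmetic: the paper first disposes separately of the case $\mu(E_k)=0$ for some $k$, whereas your unified limit argument handles it automatically; your derivation of the monotonicity of $\{\mu(E_k)\}$ from $(M_2)$ and the nesting is also the cleaner justification.
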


\begin{proof}
We define a sequence $\left\{ A_{k}\right\} $ such that $A_{0}=A$ and $%
A_{k}=Conv\left( TA_{k-1}\right) $ for all $k\geq 1.$ Then we get 
\begin{eqnarray*}
TA_{0} &=&TA\subseteq A=A_{0} \\
A_{1} &=&Conv\left( TA_{0}\right) \subseteq A=A_{0}.
\end{eqnarray*}%
By repeating process mentioned above, we have 
\begin{equation*}
A_{0}\supseteq A_{1}\supseteq E_{2}\supseteq \cdots \supseteq A_{k}\supseteq
\cdots .
\end{equation*}%
If there exists an integer $k\geq 0$ such that $\mu \left( A_{k}\right) =0$,
then $A_{k}$ is relatively compact and since%
\begin{equation*}
TA_{k}\subseteq Conv\left( TA_{k}\right) =A_{k+1}\subseteq A_{k},
\end{equation*}%
Theorem \ref{sch} implies that $T$ has a fixed point in Schauder's sense on
the set $A_{k}$ for all $k\geq 0.$ Now we assume that $\mu (A_{k})>0$ for
all $k\geq 0.$ By using (\ref{2}) we have%
\begin{eqnarray}
\psi _{n}(\mu (A_{k+1})) &=&\psi _{n}(\mu (Conv\left( TA_{k}\right) ))
\label{3} \\
&=&\psi _{n}(\mu \left( TA_{k}\right) )  \notag \\
&\leq &\phi _{n}(\mu (A_{k})).  \notag
\end{eqnarray}

Let us consider (\ref{2}). Then we obtain that $\left\{ \mu (A_{k})\right\} $
is a decreasing sequence of positive real numbers and there exists $p\geq 0$
such that $\mu (A_{k})\rightarrow p$ as $k\rightarrow \infty .$ By using (%
\ref{3}) and Lemma 3.3, we have 
\begin{equation*}
\psi _{n}(\mu (A_{k+1}))\rightarrow \psi (\mu (A_{k+1})),\text{ uniformly in 
}n
\end{equation*}%
and 
\begin{equation}
\psi (\mu (A_{k+1}))=\psi (\mu (TA_{k})).  \label{5}
\end{equation}%
Also, if $\mu (A_{k})\rightarrow p$ as $k\rightarrow \infty ,$ then $\mu
(A_{k+1})\rightarrow p$ as $k\rightarrow \infty .$ Hence we have%
\begin{eqnarray*}
\lim_{k\rightarrow \infty }\psi (\mu \left( A_{k+1}\right) )
&=&\lim_{k\rightarrow \infty }\psi (\mu \left( TA_{k}\right) )\leq
\lim_{k\rightarrow \infty }\phi (\mu (A_{k})) \\
\psi (p) &\leq &\phi (p).
\end{eqnarray*}

By condition $(ii)$ of Definition \ref{shift}, we get $p=0$. So we have $\mu
(A_{k})\rightarrow 0$ as $k\rightarrow \infty .$ On the other hand, since $%
A_{k+1}\subseteq A_{k},$ $TA_{k}\subseteq A_{k}$ and $\mu (A_{k})\rightarrow
0$ as $k\rightarrow \infty .$ Using $(M_{6})$ of Definition \ref{darbo}, $%
A_{\infty }=\underset{k=1}{\overset{\infty }{\cap }}A_{k}$ is nonempty,
closed, convex, and invariant under $T.$ Hence the mapping $T$ belong to Ker 
$\mu .$Therefore, Schauder's fixed point theorem implies that has a fixed
point in $A_{\infty }\subset $ $A$.
\end{proof}

\begin{example}
\bigskip The under the conditions of \ Definition \ref{shift}, we consider
the following sequence of functions%
\begin{equation*}
\psi _{n}(u)=\frac{2n(1+u)+2u+1}{n+1},\text{ }\phi _{n}(v)=\frac{n(2+v)+1}{n}%
.
\end{equation*}

It is clear that  $\psi _{n}(u)\leq \phi _{n}(v),$ for all $n\in
\mathbb{N}
$ and $u,v\in \lbrack 0,\infty ).$ It is easy to see that the pairs $\left(
\psi _{n},\phi _{n}\right) \rightarrow \left( \psi ,\phi \right) $ are
shifting distance function. To see this, we have
\begin{equation*}
\lim_{n\rightarrow \infty }\frac{2n(1+u)+2u+1}{n+1}=2+2u\leq
2+v=\lim_{n\rightarrow \infty }\frac{n(2+v)+1}{n}.
\end{equation*}

Therefore $\left( \psi ,\phi \right) $ is shifting distance functions.

Now we suppose that $u=\mu (TA)$ and $v=\mu (A).$ Since%
\begin{equation*}
\frac{2n(1+\mu (TA))+2\mu (TA)+1}{n+1}\leq \frac{n(2+\mu (A))+1}{n},
\end{equation*}

we have
\begin{equation}
2\mu (TA)-\mu (A)\leq \frac{2n+1}{n(n+1)}.  \label{6}
\end{equation}

if limit goes to infinity in (\ref{6}), we obtain
\begin{equation*}
2\mu (TA)-\mu (A)\leq 0.
\end{equation*}%
As a result,
\begin{eqnarray*}
2\mu (TA) &\leq &\mu (A) \\
\mu (TA) &\leq &\frac{1}{2}\mu (A).
\end{eqnarray*}%
Therefore, according to condition of Darbo's fixed point theorem, $T$ has a
fixed point under continuous sequences of functions. Hence this completes the
proof.
\end{example}

If we take $\psi _{n}=I_{n}$ such that $\underset{n\rightarrow \infty }{\lim 
}I_{n}=I$ uniformly convergence for all $n\in 
\mathbb{N}
$ \ in Theorem \ref{main}, we obtain the following result.

\begin{corollary}
Let $E$ be a nonempty, bounded, closed and convex subset of the Banach space 
$X$. Suppose that $T:E\rightarrow E$ is a continuous function such that 
\begin{equation*}
I_{n}\left( \mu (TA)\right) \leq \phi _{n}(\mu (A))
\end{equation*}%
for any nonempty subset of $A\subset E$, where $\mu $ is an arbitrary
measure of noncompactness and $\phi _{n}:[0,\infty )\rightarrow 
\mathbb{R}
$ be a sequence of function such that

$(a)$ for $u,v\in \lbrack 0,\infty )$ if $I_{n}\left( u\right) \leq \phi
_{n}(v)$, then $u\leq v,$

$(b)$ for $\left\{ u_{k}\right\} ,\left\{ v_{k}\right\} \subset \lbrack
0,\infty )$ with $\underset{k\rightarrow \infty }{lim}u_{k}=\underset{%
k\rightarrow \infty }{lim}v_{k}=w$, if $I_{n}\left( u_{k}\right) \leq \phi
_{n}(v_{k})$ for all $n,k\in 
\mathbb{N}
$, then $w=0$.

Then, $T$ has a fixed point in $E.$
\end{corollary}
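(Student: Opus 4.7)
The plan is to treat this as a direct specialization of Theorem \ref{main} in which the first member of the pair is taken to be the sequence $I_n$. With the substitution $\psi_n := I_n$, the contractive inequality $I_n(\mu(TA)) \leq \phi_n(\mu(A))$ is exactly condition (\ref{2}), and the hypotheses on $E$ and the continuity of $T$ transport verbatim. The assumption that $I_n \to I$ uniformly supplies the counterpart of $\psi_n \to \psi$ needed to invoke Definition \ref{shift}.

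The main step is then to check that $(I_n, \phi_n)$ has the shifting distance property of Definition \ref{shift}. For condition $(i)$ of that definition, I would observe that assumption $(a)$ of the corollary already delivers the implication $I_n(u) \leq \phi_n(v) \Rightarrow u \leq v$ at each finite $n$, so passing to the uniform limit preserves the conclusion $u \leq v$. For condition $(ii)$, assumption $(b)$ gives $w = 0$ directly from the finite-$n$ inequality $I_n(u_k) \leq \phi_n(v_k)$, which is strictly stronger than the limit-form required by Definition \ref{shift}. Hence both requirements are met without additional work.

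Having verified the shifting distance property for $(I_n, \phi_n)$, I would close the argument by invoking Theorem \ref{main} directly to obtain a fixed point of $T$ in $E$. I expect the only potential snag to be purely formal: Definition \ref{shift} is phrased in terms of a limit $\phi$ paired with $I$, but since the corollary's finite-$n$ hypotheses $(a)$ and $(b)$ already supply the required conclusions on their own, the limiting $\phi$ plays no active role in the verification, and no genuine obstacle arises. The whole proof thus reduces to a short reference to Theorem \ref{main}.
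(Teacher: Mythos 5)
Your proposal matches the paper exactly: the paper offers no separate proof of this corollary, only the preceding remark that it follows by taking $\psi_n=I_n$ (with $I_n\to I$ uniformly) in Theorem \ref{main}, which is precisely your specialization. Your additional check that conditions $(a)$ and $(b)$ imply the shifting distance property for the pair $(I_n,\phi_n)$ is a reasonable filling-in of details the paper leaves implicit, and raises the same (correctly dismissed) formal point about the limit pair $(\psi,\phi)$.
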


\begin{corollary}
\label{cor}Let $E$ be a nonempty, bounded, closed and convex subset of the
Banach space $X$. Suppose that $T:E\rightarrow E$ is a continuous mapping
such that%
\begin{equation}
\psi _{n}\left( \mu (TA)\right) \leq \psi _{n}\left( \mu (A)\right) -\phi
_{n}(\mu (A))  \label{4}
\end{equation}%
for any nonempty subset of $A\subset E$, where $\mu $ is an arbitrary
measure of noncompactness and $\psi _{n},\phi _{n}:[0,\infty )\rightarrow 
\mathbb{R}
^{+}$ be a pair having shifting distance property. Also the pair $\left(
\psi ,\phi \right) $ is two nondecreasing and continuous functions
satisfying $\psi (t)=$ $\phi (t)$ if and only if $t=0$. Then, $T$ has a
fixed point in $E.$
\end{corollary}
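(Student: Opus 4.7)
The plan is to follow the iterative scheme of the proof of Theorem \ref{main}, replacing only the step that extracts decay of $\mu(A_{k})$ from the contractive inequality. Set $A_{0}=E$ and $A_{k+1}=Conv(TA_{k})$. A short induction gives $A_{k+1}\subseteq A_{k}$, and monotonicity axiom $(M_{2})$ then makes $\{\mu(A_{k})\}$ a non-increasing nonnegative sequence, hence convergent to some $p\geq 0$. If $\mu(A_{k_{0}})=0$ for some $k_{0}$, then $A_{k_{0}}$ is relatively compact and $T$-invariant (because $TA_{k_{0}}\subseteq Conv(TA_{k_{0}})=A_{k_{0}+1}\subseteq A_{k_{0}}$), and Theorem \ref{sch} immediately supplies a fixed point. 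Thus the remaining task is to treat the generic case $\mu(A_{k})>0$ for all $k$ and to prove that in fact $p=0$.

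Apply the contractive hypothesis (\ref{4}) to $A=A_{k}$ and use axiom $(M_{4})$ to rewrite $\mu(A_{k+1})=\mu(TA_{k})$, yielding
\begin{equation*}
\psi_{n}(\mu(A_{k+1}))\leq \psi_{n}(\mu(A_{k}))-\phi_{n}(\mu(A_{k}))
\end{equation*}
for every $n,k\in\mathbb{N}$. Since $(\psi_{n},\phi_{n})$ has the shifting distance property, Lemma 3.3 delivers the uniform convergences $\psi_{n}\to \psi$ and $\phi_{n}\to \phi$; passing to the limit in $n$ gives
\begin{equation*}
\psi(\mu(A_{k+1}))\leq \psi(\mu(A_{k}))-\phi(\mu(A_{k})).
\end{equation*}

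Now I let $k\to\infty$. Continuity of $\psi$ and $\phi$ together with $\mu(A_{k}),\mu(A_{k+1})\to p$ gives $\psi(p)\leq \psi(p)-\phi(p)$, whence $\phi(p)\leq 0$. Since $\phi$ takes values in $\mathbb{R}^{+}$ this forces $\phi(p)=0$, and the stated hypothesis on $(\psi,\phi)$ — read in the natural way that the pair coincides (at the common value $0$) exactly when $t=0$ — yields $p=0$. Hence $\mu(A_{k})\to 0$, and axiom $(M_{6})$ ensures that $A_{\infty}=\bigcap_{k\geq 1}A_{k}$ is nonempty; by the closing remark in Definition \ref{darbo} it lies in Ker$\mu$, so $A_{\infty}$ is closed, convex, relatively compact and $T$-invariant, and Schauder's theorem produces a fixed point of $T$ in $A_{\infty}\subseteq E$.

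The delicate point of the plan is the extraction of $p=0$ from $\phi(p)=0$: one must read the hypothesis "$\psi(t)=\phi(t)$ if and only if $t=0$" as forcing $\phi$ (and $\psi$) to vanish only at $0$. Under this reading the argument is essentially a direct transcription of the proof of Theorem \ref{main}. A tempting alternative is to recast the hypothesis as $\psi_{n}(\mu(TA))\leq \tilde{\phi}_{n}(\mu(A))$ with $\tilde{\phi}_{n}(v):=\psi_{n}(v)-\phi_{n}(v)$ and cite Theorem \ref{main} directly, but verifying condition $(i)$ of Definition \ref{shift} for $(\psi_{n},\tilde{\phi}_{n})$ requires strict monotonicity of $\psi$, which is not part of the hypothesis; this is why I prefer the direct route above.
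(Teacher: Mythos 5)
Your argument is correct under the intended reading of the hypotheses, but it takes a genuinely different --- and considerably more complete --- route than the paper does. The paper's own proof never runs the Darbo iteration: it passes to the limit in (\ref{4}) once to obtain (\ref{7}) and then argues by \emph{supposing} that $\psi(\mu(A))=\phi(\mu(A))$, a premise that is nowhere derived, so the published argument is essentially circular and the fixed point is asserted rather than produced. You instead transplant the full iterative scheme of Theorem \ref{main}: build the nested sets $A_{k}$, extract the limit $p$ of the monotone sequence $\mu(A_{k})$, pass to the limit in $n$ via the uniform convergence supplied by Lemma 3.3, and then let $k\to\infty$ to reach $\psi(p)\leq\psi(p)-\phi(p)$, hence $\phi(p)=0$. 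This is the standard Dutta--Choudhury/Aghajani-type argument, and it is clearly what the authors intended; it buys an actual derivation of $\mu(A_{k})\to 0$ where the paper has none. Your reason for not reducing to Theorem \ref{main} via $\tilde{\phi}_{n}=\psi_{n}-\phi_{n}$ is also sound.

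The point you flag at the end is a real gap, shared with the paper, and worth stating more bluntly: from $\phi(p)=0$ the literal hypothesis ``$\psi(t)=\phi(t)$ if and only if $t=0$'' does not yield $p=0$, because you have established $\phi(p)=0$ but not $\psi(p)=\phi(p)$. Applying (\ref{4}) to a singleton does force $\phi(0)=0$ and hence $\psi(0)=0$, but nothing in the stated hypotheses prevents the nondecreasing function $\phi$ from vanishing on a whole interval $[0,p_{0}]$ with $p_{0}>0$ while $\psi$ remains positive and distinct from $\phi$ there; on that interval the contractive condition degenerates to $\psi(\mu(TA))\leq\psi(\mu(A))$, which is not enough. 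So either one reads the hypothesis as the usual altering-distance condition ($\phi(t)=0$ if and only if $t=0$), as you propose --- in which case your proof is complete --- or the corollary as literally stated remains unproved, by you and by the paper alike.
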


\begin{proof}
Assume that (\ref{4}) holds. If by taking limit on (\ref{4}), we get 
\begin{equation}
\psi \left( \mu (TA)\right) \leq \psi \left( \mu (A)\right) -\phi (\mu (A)).
\label{7}
\end{equation}

Besides, by using hypothesis in statement, we suppose that $\psi \left( \mu
(A)\right) =\phi (\mu (A)).$ Then we get $\mu (A)=0.$ Under the conditions
of Theorem \ref{main} , $A$ is relatively compact and then Theorem \ref{sch}
implies that $T$ has a fixed point in $E$. Conversely, we suppose that $\mu
(A)=0.$ Then in (\ref{7}) $\psi \left( \mu (A)\right) =\phi (\mu (A)).$
Since $\mu (A)=0,$ it is clear that $A$ is relatively compact. Hence using
Theorem \ref{sch}, again, we say that $T$ has a fixed point in $E.$ Also
since $\left( \psi ,\phi \right) \in 
\mathbb{R}
^{+},$ $\mu (TA)=0.$ So by repeating the conditions of Theorem \ref{main},
we obtain that $T$ belong to Ker $\mu $. \ As a result, mapping $T$ has a
fixed point in $A_{\infty }\subset A.$
\end{proof}

\begin{corollary}
Let $E$ be a nonempty, bounded, closed and convex subset of a Banach space $%
X $ and let\ $T:E\rightarrow E$ be a continuous mapping. Suppose that there
exists a constant $k\in \lbrack 0,1)$ such that%
\begin{equation*}
\mu (TA)\leq k\mu (A),
\end{equation*}%
for any subset $A\subset E$, then $T$ has a fixed point.
\end{corollary}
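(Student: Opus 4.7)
The plan is to recover the hypothesis of this corollary as a trivial specialization of Theorem \ref{main} by choosing a simple constant pair of sequences. Specifically, I would set $\psi_n(t)=t$ and $\phi_n(t)=kt$ for every $n\in\mathbb{N}$ and every $t\in[0,\infty)$, where $k\in[0,1)$ is the contraction constant supplied in the statement. Because both sequences are constant, they converge uniformly on $[0,\infty)$ to the limit functions $\psi(t)=t$ and $\phi(t)=kt$, so any uniformity requirement in Definition \ref{shift} is automatic.

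Next I would verify that $(\psi_n,\phi_n)$ forms a pair of sequences of functions with the shifting distance property. For condition (i): if $\psi_n(u)\leq\phi_n(v)$, i.e.\ $u\leq kv$, then since $k<1$ one has $u\leq kv\leq v$, as required. For condition (ii): suppose $\{u_j\},\{v_j\}\subset[0,\infty)$ satisfy $\lim_{j\to\infty}u_j=\lim_{j\to\infty}v_j=w$ and $\psi_n(u_j)\leq\phi_n(v_j)$, that is $u_j\leq kv_j$, for all $j$. Taking $j\to\infty$ yields $w\leq kw$, equivalently $(1-k)w\leq 0$, and since $1-k>0$ this forces $w=0$.

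With the pair of sequences validated, the contraction estimate of the hypothesis rewrites as
\begin{equation*}
\psi_n(\mu(TA))=\mu(TA)\leq k\mu(A)=\phi_n(\mu(A))
\end{equation*}
for every nonempty subset $A\subset E$ and every $n\in\mathbb{N}$. Therefore all hypotheses of Theorem \ref{main} are met, and that theorem delivers a fixed point of $T$ in $E$.

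There is essentially no obstacle here: this is a direct specialization, and the only point requiring mild care is interpreting the uniform-convergence clauses of Definition \ref{shift} correctly, which is harmless for constant sequences. As an alternative route one could instead invoke Corollary \ref{cor} with $\psi_n(t)=t$ and $\phi_n(t)=(1-k)t$, but the direct application of Theorem \ref{main} is shorter and avoids the extra nondegeneracy condition $\psi(t)=\phi(t)\Leftrightarrow t=0$ which would be problematic in the edge case $k=0$.
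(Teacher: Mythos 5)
Your proposal is correct and takes essentially the same route as the paper, which likewise specializes Theorem \ref{main} by choosing $\psi_n$ to be (a sequence converging uniformly to) the identity and $\phi_n=k\psi_n$. Your version is slightly more careful in that you explicitly verify conditions $(i)$ and $(ii)$ of the shifting distance property, a check the paper omits.
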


\begin{proof}
Taking $\psi _{n}(t)=I_{n}$ and $\phi _{n}(t)=kI_{n}$ such that $%
I_{n}\rightarrow I$ uniformly $n$ in Theorem \ref{main}, we get Darbo's
fixed point theorem, where $k\in \lbrack 0,1).$
\end{proof}
\noindent\textbf{Acknowledgement.} This work was supported by the Ahi Evran
University Scientific Research Projects Coordination Unit. Project Number:
RKT. A3.17.001.

\end{document}